\documentclass[12pt, leqno]{amsart}

\usepackage{amsfonts,amsthm,amsmath,xcolor,comment}
\numberwithin{equation}{section}

\usepackage{amssymb}

\theoremstyle{plain}
\newtheorem{thm}{Theorem}[section]

\newtheorem{lem}{Lemma}[section]

\theoremstyle{definition}
\newtheorem{df}{Definition}[section]
\newtheorem{rem}{Remark}[section]


\newcommand{\ZZ}{\mathbb{Z}}

\newcommand{\CC}{\mathbb{C}}
\newcommand{\Lap}{\mathrm{Lap}}

\DeclareMathOperator{\Cay}{Cay}
\DeclareMathOperator{\Kf}{Kf}

\begin{document}


\title{On the average hitting times of weighted Cayley graphs}
\author{Yuuho Tanaka}

\address{Faculty of Science and Engineering, Waseda University, Tokyo 169-8555, Japan}
\curraddr{Faculty of Science and Technology, Oita University, Oita, Oita 870-1192, Japan}
\email{tanaka-yuuho@oita-u.ac.jp}
\date{}
\maketitle

\begin{abstract}
In this paper, we give exact formulas for the average hitting times of random walks from one vertex to any other vertex on certain weighted Cayley graphs.
\end{abstract}
\noindent
Keywords: random walk, hitting time, weighted graph, Cayley graph, Kirchhoff index

\section{Introduction}

Let $G=(V,E)$ be a graph.
A {\it weight} on $G$ is a function $w:V\times V\to[0,+\infty )$ such that for $u,v\in V$,
\[
\begin{cases}
w((u,v))>0 &\text{if $(u,v)\in E$,}\\
w((u,v))=0 &\text{otherwise.}
\end{cases}
\]
In other words, $w$ assigns a positive number $w((u,v))$ to each edge $(u,v)\in E$.
Such a graph $G$ is called a {\it weighted graph}.
Every graph $G=(V,E)$ can be given the {\it trivial weight} $w((u,v))=1$ if $(u,v)\in E$ and $w((u,v))=0$ otherwise. 
In this case, we refer to $G$ as an \textit{unweighted graph}.

A random walk on a graph $G$ is a discrete stochastic model
such that a random walker on a vertex $u\in V$ moves to an adjacent vertex $v$ at the next step with probability $\frac{w((u,v))}{W(u)}$, where $W(u)=\sum_{x\in V}w((u,x))$.
A random walk on an unweighted graph (namely $w((u,v))/W(u)=1/\deg(u)$, where $\deg(u)=|\{v\in V \mid (u,v)\in E\}|$) is called a {\it simple random walk}.
The number of steps required until the random walker starting from
a vertex $u$ of $G$ will first arrive at a vertex $v$ of $G$ is called
the {\it hitting time} from $u$ to $v$ of the random walk on $G$.
The {\it average hitting time} (AHT) from $u$ to $v$ on $G$, which is denoted by $h(G;u,v)$, means the expected value of the hitting time
from $u$ to $v$ of random walks on $G$.

Using an elementary method, Doi et al. \cite{HT} gave the exact formula for the AHTs of simple random walks on the unweighted Cayley graph $\Cay(\ZZ_N,\{\pm1,\pm2\})$.
Furthermore, Tanaka \cite{YT2024} gave the exact formula for the AHTs of simple random walks on the unweighted Cayley graph $\Cay(\ZZ_N,\{+1,+2\})$.
The purpose of the present paper is to derive exact formulas for the AHTs of random walks on certain weighted Cayley graphs by using matrix equations and matrix decompositions.
Chang et al.~\cite{XHS} obtained an explicit formula for the AHTs of random walks on weighted undirected graphs by using the formula for the number of spanning trees of the graphs.
Therefore, the aim of this paper is to obtain exact formulas for the AHTs of random walks on the weighted Cayley graphs $\Cay(\ZZ_{2n},\{\pm1\})$ and $\Cay(\ZZ_N,\{+1,+2\})$. 

For undirected Cayley graphs, in the method using matrix equations, the entries of the inverse of the coefficient matrix can be expressed using the sequence of numbers appearing in the formula for the number of spanning trees~\cite{HT, Kleitman}.
Therefore, we hypothesize that a similar property holds for weighted Cayley graphs.
For a weighted graph, we can obtain the weighted spanning tree enumerator from the weighted matrix-tree theorem~\cite{Kirchhoff, KS}.
Thus, we can deduce the entries of the inverse of the coefficient matrix in this case.

Our proof is concise and fully combinatorial.
In particular, it does not require any spectral graph theoretical arguments.
For the weighted directed Cayley graph $\Cay(\ZZ_N,\{+1,+2\})$, 
we develop a novel method by combining the Laplacian matrix approach $LU$ decomposition~\cite{Horn}.

This paper is organized as follows.
In Section~2, we present our main results concerning the exact formulas for the AHTs and discuss their implications.
In Section~3, we provide fundamental definitions related to weighted graphs. 
In Section~4, we derive the exact formula for the AHTs of random walks on the weighted Cayley graph $\Cay(\ZZ_{2n},\{\pm1\})$ in a special case, by decomposing the coefficient matrix of the matrix equations for the AHTs into a product of simple matrices.
We also compute the inverse of each of these simple matrices to derive the exact formula.
In Section~5, we present the exact formula for the AHTs of random walks on the weighted Cayley graph $\Cay(\ZZ_N,\{+1,+2\})$ in a special case.
Finally, in Section~6, we establish the exact formula for the Kirchhoff index of random walks on these weighted Cayley graphs.

\section{Main Results}
In this section, we present our main results concerning the exact formulas for the AHTs on the weighted Cayley graphs.

\begin{thm} \label{main1}
For $1\leq \ell\leq 2n-1$, the exact formula for the AHTs of random walks on the weighted Cayley graph $\Cay(\ZZ_{2n},\{\pm1\})$ is
\begin{align*}
&h(\Cay(\ZZ_{2n},\{\pm1\});0,\ell) \\
&=\frac{1}{4p(1-p)}
\begin{cases}
\ell(2n-\ell) &\text{if $\ell$ is even,}\\
(\ell-1)(2n-\ell+1)+4(1-p)(n-\ell+p) &\text{if $\ell$ is odd,}
\end{cases}\\
&h(\Cay(\ZZ_{2n},\{\pm1\});1,\ell+1) \\
&=\frac{1}{4p(1-p)}
\begin{cases}
\ell(2n-\ell) &\text{if $\ell$ is even,}\\
(\ell-1)(2n-\ell+1)+4p(n-\ell+1-p) &\text{if $\ell$ is odd.}
\end{cases}
\end{align*}
\end{thm}

\begin{thm} \label{main2}
For $1\leq \ell\leq N-1$, an exact formula for the AHTs of random walks on the weighted Cayley graph $\Cay(\ZZ_N,\{+1,+2\})$ is
\[
h(\Cay(\ZZ_N,\{+1,+2\});0,\ell)
=\frac{N(p-1)((p-1)^\ell-1)-\ell((p-1)^N-1)}{(p-2)((p-1)^N-1)}.
\]
\end{thm}

These exact formulas under the weighted settings generalize the previous results for unweighted graphs provided by Doi et al.~\cite{HT} and Tanaka~\cite{YT2024}.
Crucially, our formulas explicitly capture the dependence on the weight parameters $p$ and $q$, revealing how the asymmetry of edge weights influences the expected random walk steps.
As an important application of these main theorems, we can also evaluate the topological invariants of the underlying graphs.
In Section~6, we utilize the above theorems to derive exact formulas for the Kirchhoff index of these weighted Cayley graphs, demonstrating the utility of our AHT formulas in electrical network theory.

\section{Preliminaries}

A directed graph (or digraph) is a couple $G=(V,E)$, where $V$ is a set of {\it vertices} and $E$ is a family of ordered pairs $(x,y)$ of distinct elements in $V$, which is called the set of {\it edges}. If $(x,y)\in E$, then we say that vertices $x$ and $y$ are {\it incident} with the edge $(x,y)$ {\it joining} $x$ and $y$.

The {\it Laplacian matrix} $\Lap_G=[\Lap_G(i,j)]$ indexed by the vertices for a weighted graph $G$ is defined as
\[
\Lap_G(i,j)=\begin{cases}
\sum_{x\in V}w((i,x)) &\text{if $i=j$,}\\
-w((i,j)) &\text{otherwise.}
\end{cases}
\]
Let $\Lap_G^{\prime}$ (resp.\ $\Lap_G^{\prime\prime}$) be the matrix obtained from $\Lap_G$ by deleting the first (resp.\ last) row and column. 


Let $G$ be a finite group and let $S\subseteq G$ be a subset.
The corresponding {\it Cayley graph} $\Cay({\it G,S})$ has its vertex set equal to $G$.
Two vertices $g,h\in G$ are joined by a directed edge from $g$ to $h$ if and only if there exists $s\in S$ such that $g=sh$.
For any vertex $x$ of a Cayley graph $\Cay({\it G,S})$, the out-degree $\deg(x)=|S|$ is constant.

\section{Weighted Cayley Graph $\Cay(\ZZ_{2n},\{\pm1\})$}

We define a weight $w(\{v_i,v_j\})$ on the weighted undirected Cayley graph $\Cay(\ZZ_{2n},\{\pm1\})$ as follows:
\[
w(\{v_i,v_j\})=
\begin{cases}
p &\text{if $j-i=1$, $i$ is even,}\\
q &\text{if $j-i=1$, $i$ is odd,}\\
0 &\text{otherwise,}
\end{cases}
\]
where $v_i=i+2n\ZZ\in\ZZ_{2n}$, and for any adjacent vertices, the integer representatives $i,j\in\ZZ$ are chosen such that $j-i=1$.

Let $\vec{h}$ be the column vector whose $i$-th entry is 
\[
\begin{cases}
h_{2n}(1,2n-i+1) &\text{if $1\leq i\leq 2n-1$,\;$i$ is odd,}\\
h_{2n}(0,2n-i) &\text{if $1\leq i\leq 2n-1$,\;$i$ is even,}\\
h_{2n}(1,4n-i) &\text{if $2n\leq i\leq 2(2n-1)$,\;$i$ is odd,}\\
h_{2n}(0,4n-i-1) &\text{if $2n\leq i\leq 2(2n-1)$,\;$i$ is even,}
\end{cases}
\]
and let $\vec{1}$ be a column vector of proper dimensions whose entries are all $1$.

On the weighted undirected Cayley graph $\Cay(\ZZ_{2n},\{\pm1\})$, we can assume that a random walker starts from vertex $0$ or $1$ without loss of generality.
By definition, in a random walk on $\Cay(\ZZ_{2n},\{\pm1\})$, the walker moves with probability $\frac{p}{p+q}$ or $\frac{q}{p+q}$ to an arbitrary adjacent vertex.
Moreover, by the symmetry of $\Cay(\ZZ_{2n},\{\pm1\})$, we have 
\begin{align*}
h_{2n}(0,\ell)&=h_{2n}(2k,2k+\ell)=h_{2n}(2k,2k-2n+\ell),\\
h_{2n}(1,\ell)&=h_{2n}(2k+1,2k+\ell)=h_{2n}(2k+1,2k-2n+\ell)
\end{align*}
for all $k,\ell \in\mathbb{Z}_{2n}$.
By the above properties, we have that $\forall \ell \in\mathbb{Z}_{2n}\setminus\{0\}$, 
\[
h_{2n}(0,\ell)=\frac{1}{p+q}\left(q(1+h_{2n}(-1,\ell))+p(1+h_{2n}(1,\ell))\right),
\]
and hence $\forall \ell \in\mathbb{Z}_{2n}\setminus\{0\}$,
\[
(p+q)h_{2n}(0,\ell)-qh_{2n}(1,\ell+2)-ph_{2n}(1,\ell)=p+q.
\] 

Similarly, we have that $\forall \ell \in\mathbb{Z}_{2n}\setminus\{1\}$, 
\[
h_{2n}(1,\ell)=\frac{1}{p+q}\left(p(1+h_{2n}(0,\ell))+q(1+h_{2n}(2,\ell))\right),
\]
and hence $\forall \ell \in\mathbb{Z}_{2n}\setminus\{1\}$,
\[
(p+q)h_{2n}(1,\ell)-ph_{2n}(0,\ell)-qh_{2n}(0,\ell-2)=p+q.
\] 
Therefore, we obtain
\[
\begin{bmatrix}
\Lap^{\prime}_{\Cay(\ZZ_{2n},\{\pm1\})}&O \\
O&\Lap^{\prime\prime}_{\Cay(\ZZ_{2n},\{\pm1\})}
\end{bmatrix}\vec{h}=(p+q)\vec{1}.
\]
Let $H_{2n}$ be the $2(2n-1)\times 2(2n-1)$ matrix whose $(i,j)$-th entry is
\[
\begin{cases}
\Lap^{\prime}_{\Cay(\ZZ_{2n},\{\pm1\})}(i,j) &\text{if $1\leq i,j\leq 2n-1$,}\\
\Lap^{\prime\prime}_{\Cay(\ZZ_{2n},\{\pm1\})}(i-2n+1,j-2n+1) &\text{if $2n\leq i,j\leq 2(2n-1)$,}\\
0 &\text{otherwise.}
\end{cases}
\]
Then we have $H_{2n} \vec{h}=(p+q)\vec{1}$.

Our goal is to obtain an exact formula for the AHTs of a random walk on $\Cay(\ZZ_{2n},\{\pm1\})$, so we need to solve the above matrix equation.

\subsection{Proof of Theorem~\ref{main1}}

To determine the AHTs, it suffices to solve the matrix equation $H_{2n}\vec{h}=(p+q)\vec{1}$.
First, we decompose the coefficient matrix $H_{2n}$.

Assume that $p+q=1$. Let $U_{2n}$ be the $2(2n-1) \times 2(2n-1)$ upper triangular matrix whose $(i,j)$-th entry is $1$ if $j \geq i$ and $0$ otherwise. Furthermore, we define the matrix $R_{2n}$ as follows:
\[
R_{2n}(i,j):=
\begin{cases}
2 &\text{if $i$ is odd,\;$1\leq i=j<2n$,}\\
3-2p &\text{if $i$ is even,\;$1\leq i=j<2n$,}\\
2p &\text{if $i$ is odd,\;$2n\leq i=j\leq 2(2n-1)$,}\\
2-p &\text{if $i\neq j$,\;$1\leq i,j<2n$,}\\
p &\text{if $i\neq j$,\;$2n<i\leq 2(2n-1)$,}\\
 &\text{ $i\neq j$,\;$2n<j\leq 2(2n-1)$,}\\
1 &\text{otherwise.}
\end{cases}
\]

Then the inverse matrix $R_{2n}^{-1}$ is given by 
\[
R_{2n}^{-1}(i,j)=\frac{1}{n}
\begin{cases}
\frac{n-(1-p)}{p} &\text{if $i=j$ and $i$ is odd,}\\
\frac{n-p}{1-p} &\text{if $i=j \neq 2n$ and $i$ is even,}\\
\frac{2(n-p)}{1-p} &\text{if $i=j=2n$,}\\
-\frac{1-p}{p} &\text{if $i$ and $j$ are odd, and either $i<2n<j$ or $j<2n<i$,}\\
-\frac{p}{1-p} &\text{if $i$ and $j$ are even, $i\neq j$, and either $i\leq 2n\leq j$ or $j\leq 2n\leq i$,}\\
-1 &\text{if $i\not\equiv j\pmod2$, and either $i\leq 2n\leq j$ or $j\leq 2n\leq i$,}\\
0 &\text{otherwise.}
\end{cases}
\]

Then we have the following.
\begin{thm}\label{decomposition}
$H_{2n}=U^{-1}_{2n}\,R_{2n}\,{}^tU^{-1}_{2n}$. 
\end{thm}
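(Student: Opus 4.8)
The plan is to verify the claimed factorization $H_{2n}=U^{-1}_{2n}\,R_{2n}\,{}^tU^{-1}_{2n}$ by clearing the inverses: it suffices to show that $U_{2n}\,H_{2n}\,{}^tU_{2n}=R_{2n}$, since $U_{2n}$ is upper triangular with $1$'s on and above the diagonal, hence invertible. The advantage of this reformulation is that $U_{2n}$ and ${}^tU_{2n}$ are concrete $0/1$ matrices, so left-multiplication by $U_{2n}$ replaces the $i$-th row of $H_{2n}$ by the sum of rows $i,i+1,\dots,2(2n-1)$ (a partial column-sum from the bottom), and right-multiplication by ${}^tU_{2n}$ does the analogous operation on columns. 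Thus I would compute $(U_{2n}H_{2n}{}^tU_{2n})(i,j)=\sum_{k\ge i}\sum_{\ell\ge j}H_{2n}(k,\ell)$ and check it equals $R_{2n}(i,j)$ case by case.

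The main structural input is the explicit form of $H_{2n}$, which is block-diagonal with blocks $\Lap'$ and $\Lap''$ obtained from the Laplacian of the weighted cycle $\Cay(\ZZ_{2n},\{\pm1\})$ by deleting the first, resp.\ last, row and column. Each such block is tridiagonal (with the weight pattern alternating $p,q$ and $p+q=1$ on the diagonal), so its iterated tail-sums $\sum_{k\ge i}\sum_{\ell\ge j}\Lap(k,\ell)$ telescope: for a tridiagonal matrix the double tail-sum collapses to an expression involving only a few boundary entries near $(i,j)$ and near the bottom-right corner. Concretely, because each interior row of a graph Laplacian sums to zero, most of the contributions cancel and one is left with terms coming from (a) the diagonal/off-diagonal entries in the rows $k=i$ and $k=i+1$ paired with columns $\ell\ge j$, and (b) the "defect" rows where deleting the first or last vertex broke the zero-row-sum property. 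Carrying this out on each of the two diagonal blocks, and observing that the off-block tail-sums vanish except for the cross term that feeds the bottom-right block into the top-left rows, should reproduce exactly the seven cases in the definition of $R_{2n}$ — the diagonal values $2$, $3-2p$, $2p$, and the off-diagonal values $2-p$, $p$, $1$, with the parity conditions tracking whether a given index points to an even or odd vertex and whether it lies in the $\Lap'$ block or the $\Lap''$ block.

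I expect the main obstacle to be bookkeeping rather than conceptual: keeping straight the four-fold reindexing built into the definition of $\vec h$ (which is implicitly what makes the block structure of $H_{2n}$ line up with the alternating weights $p,q$), and correctly handling the boundary rows of each block where the telescoping leaves a nonzero remainder. In particular the entries $R_{2n}(i,i)$ for $i$ even versus $i$ odd, and the single special value $2(n-p)/(1-p)$ appearing at $i=j=2n$ in $R_{2n}^{-1}$ (reflecting that row/column $2n$ is the first row of the second block and so behaves differently), signal that the corner and seam indices need individual attention. A clean way to organize the verification is to first record a general lemma: for an $m\times m$ tridiagonal matrix $T$ with entries $T(k,k)=d_k$, $T(k,k+1)=T(k+1,k)=-e_k$, one has $(U_m T\,{}^tU_m)(i,j)$ equals an explicit small sum of the $d$'s and $e$'s indexed near $\min(i,j)$; then apply it to each block of $H_{2n}$ and assemble. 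Once the lemma is in place, matching against $R_{2n}$ is a finite check over the parity/position cases, and the statement follows.
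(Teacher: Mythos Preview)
Your plan is correct and is essentially the paper's own proof run in the opposite direction. The paper sets $S_{2n}:=U_{2n}^{-1}R_{2n}\,{}^tU_{2n}^{-1}$ and computes each entry as the four-term second difference $R(i,j)-R(i{+}1,j)-R(i,j{+}1)+R(i{+}1,j{+}1)$ (since $U_{2n}^{-1}$ has $1$'s on the diagonal and $-1$'s on the superdiagonal), then checks case by case that $S_{2n}=H_{2n}$; you instead compute $U_{2n}H_{2n}\,{}^tU_{2n}$ as the double tail-sum $\sum_{k\ge i}\sum_{\ell\ge j}H_{2n}(k,\ell)$ and check it equals $R_{2n}$. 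Both are direct entry-by-entry verifications of the same identity; your direction starts from the sparse tridiagonal $H_{2n}$ and lets the zero-row-sum property of the Laplacian collapse most of the sums, while the paper's direction starts from the dense $R_{2n}$ and lets the piecewise-constant block structure kill most of the second differences. Either way the labor is the same finite case analysis over parity and block position.

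One small correction to your write-up: the special value $2(n-p)/(1-p)$ you flag at $i=j=2n$ lives in $R_{2n}^{-1}$, not in $R_{2n}$. For the present theorem you never touch $R_{2n}^{-1}$, so that entry is irrelevant here; the seam behavior you actually need at $i=j=2n$ is simply $R_{2n}(2n,2n)=1$, which on your side is the total sum of $\Lap''$.
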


\begin{proof}
Let $S_{2n}$ denote the matrix $U_{2n}^{-1}R_{2n}{}^tU_{2n}^{-1}$.
By simple matrix computations, the entries of $S_{2n}$ can be calculated as follows: 
\small
\begin{align*}
&S_{2n}(i,j) \\
&=\begin{cases}
R_{2n}(i,j)-R_{2n}(i,j+1)-R_{2n}(i+1,j)+R_{2n}(i+1,j+1) \\
&\text{if $1\leq i,j<2(2n-1)$,}\\
R_{2n}(2(2n-1),j)-R_{2n}(2(2n-1),j+1) &\text{if $i=2(2n-1)$,}\\
 &\quad \text{$j\neq2(2n-1)$,}\\
R_{2n}(i,2(2n-1))-R_{2n}(i,2(2n-1)) &\text{if $i\neq2(2n-1)$,}\\
 &\quad \text{$j=2(2n-1)$,}\\
R_{2n}(2(2n-1),2(2n-1)) &\text{if $i=j=2(2n-1)$,} 
\end{cases}
\\
&=\begin{cases}
1 &\text{if $i=j$,}\\
-p &\text{if $i=j+1$,\;$i$ is odd,}\\
	&\text{$i=j-1$,\;$i$ is even,}\\
-1+p &\text{if $i=j+1,i\neq 2n$,\;$i$ is even,}\\
	&\text{$i=j-1,\;i\neq 2n-1$,\;$i$ is odd,}\\
0 &\text{otherwise.}
\end{cases}
\end{align*}
\normalsize

Therefore, we have $S_{2n}=H_{2n}$.
\end{proof}

Defining a new variable vector $\vec{h'}={}^tU_{2n}^{-1}\vec{h}$, the matrix equation $H_{2n} \vec{h}=\vec{1}$ can be expressed by a single matrix equation as 
\begin{equation*}
\vec{h'}=
\begin{bmatrix}
h'_1\\
h'_2\\
\vdots\\
h'_{2n-1}\\
h'_{2n}\\
h'_{2n+1}\\
\vdots\\
h'_{2(2n-1)-1}\\
h'_{2(2n-1)}\\
\end{bmatrix}
:=
\begin{bmatrix}
h_{2n}(1,2n)\\
h_{2n}(0,2n-2)-h_{2n}(1,2n)\\
\vdots\\
h_{2n}(1,2)-h_{2n}(0,2)\\
h_{2n}(0,2n-1)-h_{2n}(1,2)\\
h_{2n}(1,2n-1)-h_{2n}(0,2n-1)\\
\vdots\\
h_{2n}(1,3)-h_{2n}(0,3)\\
h_{2n}(0,1)-h_{2n}(1,3)\\
\end{bmatrix}
=R_{2n}^{-1}
\begin{bmatrix}
2(2n-1)\\
2(2n-1)-1\\
\vdots\\
2n\\
2n-1\\
2n-2\\
\vdots\\
2\\
1\\
\end{bmatrix}.
\end{equation*}

Solving this matrix equation, we obtain the following lemma. 
\begin{lem}\label{2.2}
For $1\leq \ell\leq 2(2n-1)$,
\[
h'_\ell=\begin{cases}
\frac{n-\ell+p}{p} &\text{if $1\leq \ell\leq 2n-1$,\;$\ell$ is odd,}\\
\frac{n-\ell+p}{1-p} &\text{if $1\leq \ell\leq 2n-1$,\;$\ell$ is even,}\\
0 &\text{if $\ell=2n$,}\\
\frac{3n-\ell-p}{p} &\text{if $2n+1\leq \ell\leq 2(2n-1)$,\;$\ell$ is odd,}\\
\frac{3n-\ell-p}{1-p} &\text{if $2n+1\leq \ell\leq 2(2n-1)$,\;$\ell$ is even.}
\end{cases}
\]
\end{lem}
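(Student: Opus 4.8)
The plan is to compute $\vec{h'} = R_{2n}^{-1}\,\vec{b}$ directly, where $\vec{b}$ is the column vector with $b_i = 2(2n-1) - i + 1$ for $1 \le i \le 2(2n-1)$ (so $b_i$ runs from $2(2n-1)$ down to $1$). Since Theorem~\ref{decomposition} gives $H_{2n} = U_{2n}^{-1} R_{2n} \,{}^tU_{2n}^{-1}$ and the original system is $H_{2n}\vec{h} = \vec{1}$ (after normalizing $p+q=1$), substituting $\vec{h'} = {}^tU_{2n}^{-1}\vec{h}$ reduces everything to $R_{2n}\vec{h'} = U_{2n}\vec{1} = \vec{b}$, because $U_{2n}\vec{1}$ has $i$-th entry equal to the number of $j \ge i$, namely $2(2n-1) - i + 1$. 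Hence $\vec{h'} = R_{2n}^{-1}\vec{b}$, and the lemma is purely a matter of evaluating this matrix-vector product using the explicit closed form already recorded for $R_{2n}^{-1}$.

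The computation splits naturally according to the block/parity structure of $R_{2n}^{-1}$. For a fixed index $\ell$, the entry $h'_\ell = \sum_j R_{2n}^{-1}(\ell,j)\, b_j$ receives contributions only from: the diagonal term $R_{2n}^{-1}(\ell,\ell) b_\ell$; the off-diagonal terms with $j$ of the same parity as $\ell$ that straddle $2n$ (the $-\frac{1-p}{p}$ or $-\frac{p}{1-p}$ entries); and the off-diagonal terms with $j$ of opposite parity that straddle $2n$ (the $-1$ entries). All other entries of $R_{2n}^{-1}$ vanish. So for each of the five cases in the statement ($\ell$ odd/even in the lower block, $\ell = 2n$, $\ell$ odd/even in the upper block), I would write out the finitely many surviving terms, insert $b_j = 2(2n-1)-j+1$, and collect. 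The telescoping/arithmetic-series sums over the ``straddling'' indices $j$ (which form an arithmetic progression with common difference $2$) are elementary; the $\frac{1}{n}$ prefactor in $R_{2n}^{-1}$ and the factor $n$ implicit in the number of such indices should cancel to leave the clean expressions $\frac{n-\ell+p}{p}$, $\frac{n-\ell+p}{1-p}$, $0$, $\frac{3n-\ell-p}{p}$, $\frac{3n-\ell-p}{1-p}$.

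Alternatively — and this may be cleaner to present — I would verify the claimed $\vec{h'}$ by checking $R_{2n}\vec{h'} = \vec{b}$ directly, since $R_{2n}$ has a much simpler structure than its inverse (it is, block by block, a rank-one perturbation of a diagonal matrix). One computes the $i$-th entry of $R_{2n}\vec{h'}$ using the explicit piecewise formula for $R_{2n}(i,j)$ and the candidate $h'_j$, again exploiting that most of the sum consists of the constant off-diagonal values times a sum of a few $h'_j$'s. This avoids reproving the stated form of $R_{2n}^{-1}$ and turns the lemma into a verification. Either way, the main obstacle is bookkeeping rather than mathematical difficulty: one must handle the boundary indices $\ell = 2n-1$, $\ell = 2n$, $\ell = 2n+1$ and the two block endpoints carefully, since the piecewise definitions of $R_{2n}$ and $R_{2n}^{-1}$ change there, and it is easy to drop or double-count a term at the seam between the two $(2n-1)$-sized blocks. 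I expect no conceptual hurdle beyond disciplined case analysis, and I would organize the write-up so that the generic case is done in full and the boundary cases are indicated as the same computation with the adjusted entries.
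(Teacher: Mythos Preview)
Your primary approach is correct and is exactly what the paper does: it computes $h'_\ell=\sum_j R_{2n}^{-1}(\ell,j)\,b_j$ case by case, using the explicit closed form of $R_{2n}^{-1}$ and the parity/block structure you describe, and reduces each case to an arithmetic-progression sum. Your suggested alternative of verifying $R_{2n}\vec{h'}=\vec{b}$ directly is a legitimate and slightly cleaner variant (it avoids relying on the stated inverse), but it is not the route the paper takes.
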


\begin{proof}
For the case where $1\leq \ell\leq 2n-1$ ($\ell$ is odd), we have
\begin{align*}
h'_\ell&=
R_{2n}^{-1}{}^t
\begin{bmatrix}
4n-2, 4n-3, \cdots, 1
\end{bmatrix} \\
&=\sum_{k=1}^{2n-1}R_{2n}^{-1}(\ell,k)(4n-k-1)\\
&=\frac{n-(1-p)}{np}(4n-\ell-1)-\frac{1-p}{np}\sum_{\substack{1\leq k\leq n \\ k\neq\frac{\ell+1}{2}}}2(2n-k)-\frac{1}{n}\sum_{k=1}^n(2(2n-k)-1)\\
&=\frac{4n-\ell-1}{p}-\frac{1-p}{np}\sum_{k=1}^n2(2n-k)-\frac{1}{n}\sum_{k=1}^n(2(2n-k)-1)\\
&=\frac{4n-\ell-1}{p}-\frac{1-p}{np}(4n^2-n(n+1))-\frac{1}{n}(n(4n-1)-n(n+1))\\
&=\frac{n-\ell+p}{p}.
\end{align*}

For the case where $1\leq \ell\leq 2n-1$ ($\ell$ is even), we have
\begin{align*}
h'_\ell&=
R_{2n}^{-1}{}^t
\begin{bmatrix}
4n-2, 4n-3, \cdots, 1
\end{bmatrix} \\
&=\sum_{k=1}^{2n-1}R_{2n}^{-1}(\ell,k)(4n-k-1)\\
&=\frac{n-p}{n(1-p)}(4n-\ell-1)-\frac{p}{n(1-p)}\sum_{\substack{1\leq k\leq n \\ k\neq\frac{\ell}{2}}}(2(2n-k)-1)-\frac{1}{n}\sum_{k=1}^n2(2n-k)\\
&=\frac{4n-\ell-1}{1-p}-\frac{p}{n(1-p)}\sum_{k=1}^n(2(2n-k)-1)-\frac{1}{n}\sum_{k=1}^n2(2n-k)\\
&=\frac{4n-\ell-1}{1-p}-\frac{p}{n(1-p)}(n(4n-1)-n(n+1))-\frac{1}{n}(4n^2-n(n+1))\\
&=\frac{n-\ell+p}{1-p}.
\end{align*}

In the case of $\ell=2n$, we have
\begin{align*}
h'_\ell&=
R_{2n}^{-1}{}^t
\begin{bmatrix}
4n-2, 4n-3, \cdots, 1
\end{bmatrix} \\
&=\sum_{k=1}^{4n-3}R_{2n}^{-1}(2n,k)(4n-k-2)\\
&=\frac{2(n-p)}{n(1-p)}(2n-1)-\frac{p}{n(1-p)}\sum_{\substack{1\leq k\leq 2n-1 \\ k\neq n}}(2(2n-k)-1)-\frac{1}{n}\sum_{k=1}^{2n-1}2(2n-k)\\
&=\frac{(2n-1)(2n-p)}{n(1-p)}-\frac{p}{n(1-p)}((2n-1)(4n-1)-2n(2n-1))\\
&~-\frac{1}{n}(4n(2n-1)-2n(2n-1))\\
&=\frac{(2n-1)(2n-p)}{n(1-p)}-\frac{(2n-1)^2p}{n(1-p)}-2(2n-1)\\
&=0.
\end{align*}

For the case where $2n+1\leq \ell\leq 2(2n-1)$ ($\ell$ is odd), we have
\begin{align*}
h'_\ell&=
R_{2n}^{-1}{}^t
\begin{bmatrix}
4n-2, 4n-3, \cdots, 1
\end{bmatrix} \\
&=\sum_{k=2n-1}^{4n-3}R_{2n}^{-1}(\ell,k)(4n-k-1)\\
&=\frac{n-(1-p)}{np}(4n-\ell-1)-\frac{1-p}{np}\sum_{\substack{n+1\leq k\leq 2n-1 \\ k\neq\frac{\ell+1}{2}}}2(2n-k)-\frac{1}{n}\sum_{k=n}^{2n-1}(2(2n-k)-1)\\
&=\frac{4n-\ell-1}{p}-\frac{1-p}{np}\sum_{k=n+1}^{2n-1}2(2n-k)-\frac{1}{n}\sum_{k=n}^{2n-1}(2(2n-k)-1)\\
&=\frac{4n-\ell-1}{p}-\frac{1-p}{np}\sum_{k=1}^{n-1}2(n-k)-\frac{1}{n}\sum_{k=0}^{n-1}(2(n-k)-1)\\
&=\frac{4n-\ell-1}{p}-\frac{1-p}{np}(2n(n-1)-n(n-1))-\frac{1}{n}(n(2n-1)-n(n-1))\\
&=\frac{3n-\ell-p}{p}.
\end{align*}

For the case where $2n+1\leq \ell\leq 2(2n-1)$ ($\ell$ is even), we have
\begin{align*}
h'_\ell&=
R_{2n}^{-1}{}^t
\begin{bmatrix}
4n-2, 4n-3, \cdots, 1
\end{bmatrix} \\
&=\sum_{k=2n-1}^{4n-3}R_{2n}^{-1}(\ell,k)(4n-k-1)\\
&=\frac{n-p}{n(1-p)}(4n-\ell-1)-\frac{p}{n(1-p)}\sum_{\substack{n\leq k\leq 2n-1 \\ k\neq\frac{\ell}{2}}}(2(2n-k)-1)-\frac{1}{n}\sum_{k=n+1}^{2n-1}2(2n-k)\\
&=\frac{4n-\ell-1}{1-p}-\frac{p}{n(1-p)}\sum_{k=n}^{2n-1}(2(2n-k)-1)-\frac{1}{n}\sum_{k=n+1}^{2n-1}2(2n-k)\\
&=\frac{4n-\ell-1}{1-p}-\frac{p}{n(1-p)}\sum_{k=0}^{n-1}(2(n-k)-1)-\frac{1}{n}\sum_{k=1}^{n-1}2(n-k)\\
&=\frac{4n-\ell-1}{1-p}-\frac{p}{n(1-p)}(n(2n-1)-n(n-1))-\frac{1}{n}(2n(n-1)-n(n-1))\\
&=\frac{3n-\ell-p}{1-p}.
\end{align*}
\end{proof}

Combining Lemma~\ref{2.2} and the equations 
$h_{0,\ell}=\begin{cases}
\sum_{i=1}^{2n-\ell}h'_i &\text{if $\ell$ is even,}\\
\sum_{i=1}^{4n-\ell-1}h'_i &\text{if $\ell$ is odd,}
\end{cases}$ and $h_{1,\ell+1}=\begin{cases}
\sum_{i=1}^{4n-\ell-1}h'_i &\text{if $\ell$ is even,}\\
\sum_{i=1}^{2n-\ell}h'_i &\text{if $\ell$ is odd,}
\end{cases}$ 
for $1\leq \ell\leq 2n-1$, we obtain Theorem~\ref{main1}.

\begin{proof}

For the case where $\ell$ is even, we have

\begin{align*}
h_{2n}(0,\ell)&=\sum_{i=1}^{\frac{2n-\ell}{2}}\frac{n-(2i-1)+p}{p}+\sum_{i=1}^{\frac{2n-\ell}{2}}\frac{n-2i+p}{1-p}\\
&=\sum_{i=1}^{\frac{2n-\ell}{2}}\frac{1}{p}+\sum_{i=1}^{\frac{2n-\ell}{2}}\frac{n-2i+p}{p(1-p)}\\
&=\frac{2n-\ell}{2p}+\frac{(2n-\ell)(n+p)}{2p(1-p)}-\frac{(2n-\ell)(2n-\ell+2)}{4p(1-p)}\\
&=\frac{\ell(2n-\ell)}{4p(1-p)}.
\end{align*}

For the case where $\ell$ is odd ($\ell\neq1,2n-1$), we have
\begin{align*}
h_{2n}(0,\ell)&=\sum_{i=1}^n\frac{n-(2i-1)+p}{p}+\sum_{i=1}^{n-1}\frac{n-2i+p}{1-p}\\
&~+\sum_{i=n+1}^{\frac{4n-\ell-1}{2}}\frac{3n-(2i-1)-p}{p}+\sum_{i=n+1}^{\frac{4n-\ell-1}{2}}\frac{3n-2i-p}{1-p}\\
&=\sum_{i=1}^n\frac{n-(2i-1)+p}{p}+\sum_{i=1}^{n-1}\frac{n-2i+p}{1-p}\\
&~+\sum_{i=1}^{\frac{2n-\ell-1}{2}}\frac{n-(2i-1)-p}{p}+\sum_{i=1}^{\frac{2n-\ell-1}{2}}\frac{n-2i-p}{1-p}\\
&=\sum_{i=1}^n\frac{1}{p}-\frac{n-p}{p}+\sum_{i=1}^{n-1}\frac{n-2i+p}{p(1-p)}+\sum_{i=1}^{\frac{2n-\ell-1}{2}}\frac{1}{p}+\sum_{i=1}^{\frac{2n-\ell-1}{2}}\frac{n-2i-p}{p(1-p)}\\
&=\frac{n}{p}-\frac{n-p}{p}+\frac{(n-1)(n+p)}{p(1-p)}-\frac{n(n-1)}{p(1-p)}\\
&~+\frac{2n-\ell-1}{2p}+\frac{(2n-\ell-1)(n-p)}{2p(1-p)}-\frac{(2n-\ell-1)(2n-\ell+1)}{4p(1-p)}\\
&=\frac{2n+2p-\ell-1}{2p}+\frac{p(n-1)}{p(1-p)}+\frac{(2n-\ell-1)(-2p+\ell-1)}{4p(1-p)}\\
&=\frac{(\ell-1)(2n-\ell+1)+4(1-p)(n-\ell+p)}{4p(1-p)}.
\end{align*}

For the case where $\ell=2n-1$, we have
\begin{align*}
h_{2n}(0,2n-1)
&=\sum_{i=1}^n\frac{n-(2i-1)+p}{p}+\sum_{i=1}^{n-1}\frac{n-2i+p}{1-p}\\
&=\sum_{i=1}^n\frac{1}{p}-\frac{n-p}{p}+\sum_{i=1}^{n-1}\frac{n-2i+p}{p(1-p)}\\
&=\frac{n}{p}-\frac{n-p}{p}+\frac{(n+p)(n-1)}{p(1-p)}-\frac{n(n-1)}{p(1-p)}\\
&=\frac{(n-1)-(1-p)(n-p-1)}{p(1-p)} \\
&=\frac{(2n-1-1)(2n-(2n-1)+1)+4(1-p)(n-(2n-1)+p)}{4p(1-p)}.
\end{align*}

For the case where $\ell=1$, we have
\begin{align*}
h_{2n}(0,1)&=\sum_{i=1}^n\frac{n-(2i-1)+p}{p}+\sum_{i=1}^{n-1}\frac{n-2i+p}{1-p}\\
&~+\sum_{i=n+1}^{2n-1}\frac{3n-(2i-1)-p}{p}+\sum_{i=n+1}^{2n-1}\frac{3n-2i-p}{1-p}\\
&=\sum_{i=1}^n\frac{n-(2i-1)+p}{p}+\sum_{i=1}^{n-1}\frac{n-2i+p}{1-p}\\
&~+\sum_{i=1}^{n-1}\frac{n-(2i-1)-p}{p}+\sum_{i=1}^{n-1}\frac{n-2i-p}{1-p}\\
&=\sum_{i=1}^n\frac{1}{p}-\frac{n-p}{p}+\sum_{i=1}^{n-1}\frac{n-2i+p}{p(1-p)}+\sum_{i=1}^{n-1}\frac{1}{p}+\sum_{i=1}^{n-1}\frac{n-2i-p}{p(1-p)}\\
&=\frac{n}{p}-\frac{n-p}{p}+\frac{2n(n-1)}{p(1-p)}-\frac{2n(n-1)}{p(1-p)}+\frac{n-1}{p}\\
&=\frac{n-1+p}{p}\\
&=\frac{(1-1)(2n-1+1)+4(1-p)(n-1+p)}{4p(1-p)}.
\end{align*}

For the case where $\ell$ is even, we have
\begin{align*}
h_{2n}(1,\ell+1)&=\sum_{i=1}^n\frac{n-(2i-1)+p}{p}+\sum_{i=1}^{n-1}\frac{n-2i+p}{1-p}\\
&~+\sum_{i=n+1}^{\frac{4n-\ell}{2}}\frac{3n-(2i-1)-p}{p}+\sum_{i=n+1}^{\frac{4n-\ell-2}{2}}\frac{3n-2i-p}{1-p}\\
&=\sum_{i=1}^n\frac{n-(2i-1)+p}{p}+\sum_{i=1}^{n-1}\frac{n-2i+p}{1-p}\\
&~+\sum_{i=1}^{\frac{2n-\ell}{2}}\frac{n-(2i-1)-p}{p}+\sum_{i=1}^{\frac{2n-\ell-2}{2}}\frac{n-2i-p}{1-p} \\
&=\sum_{i=1}^n\frac{1}{p}-\frac{n-p}{p}+\sum_{i=1}^{n-1}\frac{n-2i+p}{p(1-p)}\\
&~+\sum_{i=1}^{\frac{2n-\ell}{2}}\frac{1}{p}+\frac{n-(2n-\ell)-p}{p}+\sum_{i=1}^{\frac{2n-\ell-2}{2}}\frac{n-2i-p}{p(1-p)}\\
&=\frac{n}{p}-\frac{n-p}{p}+\frac{(n-1)(n+p)}{p(1-p)}-\frac{n(n-1)}{p(1-p)}\\
&~+\frac{2n-\ell}{2p}-\frac{n-\ell+p}{p}+\frac{(2n-\ell-2)(n-p)}{2p(1-p)}-\frac{(2n-\ell-2)(2n-\ell)}{4p(1-p)}\\
&=\frac{p(n-p)}{p(1-p)}+\frac{\ell-2p}{2p}+\frac{(2n-\ell-2)(\ell-2p)}{4p(1-p)}\\
&=\frac{\ell(2n-\ell)}{4p(1-p)}.
\end{align*}

For the case where $\ell$ is odd, we have
\begin{align*}
h_{2n}(1,\ell+1)
&=\sum_{i=1}^{\frac{2n-\ell+1}{2}}\frac{n-(2i-1)+p}{p}+\sum_{i=1}^{\frac{2n-\ell-1}{2}}\frac{n-2i+p}{1-p}\\
&=\sum_{i=1}^{\frac{2n-\ell+1}{2}}\frac{1}{p}-\frac{n-\ell-p+1}{p}+\sum_{i=1}^{\frac{2n-\ell-1}{2}}\frac{n-2i+p}{p(1-p)}\\
&=\frac{2n-\ell+1}{2p}-\frac{n-\ell-p+1}{p}+\frac{(2n-\ell-1)(n+p)}{2p(1-p)}\\
&~-\frac{(2n-\ell-1)(2n-\ell+1)}{4p(1-p)}\\
&=\frac{\ell-1+2p}{2p}+\frac{(2n-\ell-1)(\ell-1)+2p(2n-\ell-1)}{4p(1-p)}\\
&=\frac{(\ell-1)(2n-\ell+1)+4p(n-\ell+1-p)}{4p(1-p)}.
\end{align*}
\end{proof}

From Theorem~\ref{decomposition}, we have $H_{2n}^{-1}={}^tU_{2n}\,R_{2n}^{-1}\,U_{2n}$. 
Hence, the entries of $H^{-1}_{2n}$ can also be calculated as follows. 
For $1\leq i,j\leq 2(2n-1)$, we have 
\begin{align*}
&H_{2n}^{-1}(i,j)=\frac{1}{4np(1-p)}\times\\
&~\begin{cases}
(2n-i-1+2p)(j+1-2p) &\text{if $1\leq j\leq i\leq 2n-1$,\;$i$ is odd,\;$j$ is odd,}\\
(2n-i)(j+1-2p) &\text{if $1\leq j\leq i\leq 2n-1$,\;$i$ is even,\;$j$ is odd,}\\
(2n-i-1+2p)j &\text{if $1\leq j\leq i\leq 2n-1$,\;$i$ is odd,\;$j$ is even,}\\
(2n-i)j &\text{if $1\leq j\leq i\leq 2n-1$,\;$i$ is even,\;$j$ is even,}\\
(2n-j-1+2p)(i+1-2p) &\text{if $1\leq i< j\leq 2n-1$,\;$i$ is odd,\;$j$ is odd,}\\
(2n-j-1+2p)i &\text{if $1\leq i< j\leq 2n-1$,\;$i$ is even,\;$j$ is odd,}\\
(2n-j)(i+1-2p) &\text{if $1\leq i< j\leq 2n-1$,\;$i$ is odd,\;$j$ is even,}\\
(2n-j)i &\text{if $1\leq i< j\leq 2n-1$,\;$i$ is even,\;$j$ is even,}\\
(2n-4n+i+2p)(4n-j-2p) &\text{if $2n\leq j\leq i\leq 2(2n-1)$,\;$i$ is odd,\;$j$ is odd,}\\
(2n-4n+i+1)(4n-j-2p) &\text{if $2n\leq j\leq i\leq 2(2n-1)$,\;$i$ is even,\;$j$ is odd,}\\
(2n-4n+i+2p)(4n-j-1)&\text{if $2n\leq j\leq i\leq 2(2n-1)$,\;$i$ is odd,\;$j$ is even,}\\
(2n-4n+i+1)(4n-j-1) &\text{if $2n\leq j\leq i\leq 2(2n-1)$,\;$i$ is even,\;$j$ is even,}\\
(2n-4n+j+2p)(4n-i-2p) &\text{if $2n\leq i<j\leq 2(2n-1)$,\;$i$ is odd,\;$j$ is odd,}\\
(2n-4n+j+2p)(4n-i-1) &\text{if $2n\leq i<j\leq 2(2n-1)$,\;$i$ is even,\;$j$ is odd,}\\
(2n-4n+j+1)(4n-i-2p) &\text{if $2n\leq i<j\leq 2(2n-1)$,\;$i$ is odd,\;$j$ is even,}\\
(2n-4n+j+1)(4n-i-1) &\text{if $2n\leq i<j\leq 2(2n-1)$,\;$i$ is even,\;$j$ is even,}\\
0 &\text{otherwise.}
\end{cases}
\end{align*}

\section{Weighted Cayley Graph $\Cay(\ZZ_N,\{+1,+2\})$}

We define a weight $w((v_i,v_j))$ on the weighted directed Cayley graph $\Cay(\ZZ_N,\{+1,+2\})$ as follows: 

\[
w((v_i,v_j))=
\begin{cases}
p &\text{if $j-i=1$,}\\
q &\text{if $j-i=2$,}\\
0 &\text{otherwise,}
\end{cases}
\]
where $i,j\in\ZZ$ and $v_i=i+N\ZZ\in\ZZ_N$.

Let $h_N(k,\ell)$ denote the AHT from a vertex $v_k$ to a vertex $v_\ell$ on the weighted directed Cayley graph $\Cay(\ZZ_N,\{+1,+2\})$.
Let $\vec{h}$ be the $(N-1)$-dimensional column vector whose $i$-th entry is $h_N(0,i)$ for $1 \leq i \leq N-1$, and let $\vec{1}$ be a column vector of proper dimensions whose entries are all $1$.

On the weighted directed Cayley graph $\Cay(\ZZ_N,\{+1,+2\})$, we can assume that a random walker starts from vertex $0$ without loss of generality.
By definition, the walker moves with probability $\frac{p}{p+q}$ or $\frac{q}{p+q}$ to an arbitrary vertex adjacent to the current vertex.
Moreover, by the symmetry of $\Cay(\ZZ_N,\{+1,+2\})$, we have
\[
h_N(0,\ell)=h_N(k,k+\ell)=h_N(k+N-\ell,k) \quad \text{for all $k,\ell \in \mathbb{Z}_{N}$.}
\]
Note that $h_N(0,0)=0$.
By the above properties, for any $\ell \in\mathbb{Z}_N\setminus\{0\}$, we have
\[
h_N(0,\ell)=\frac{p}{p+q}(1+h_N(1,\ell))+\frac{q}{p+q}(1+h_N(2,\ell)),
\]
and hence $\forall \ell \in\mathbb{Z}_N\setminus\{0\}$,
\[
(p+q)h_N(0,\ell)-ph_N(0,\ell-1)-qh_N(0,\ell-2)=p+q.
\] 
We thus have ${}^t\Lap_{\Cay(\ZZ_N,\{+1,+2\})}^{\prime\prime} \vec{h}=(p+q)\vec{1}$.
Let $H_N$ be the $(N-1)\times(N-1)$ matrix whose $(i,j)$-th entry is ${}^t\Lap_{\Cay(\ZZ_N,\{+1,+2\})}^{\prime\prime}(i,j)$.

Then we obtain the matrix equation $H_N\vec{h}=(p+q)\vec{1}$.

Our goal is to obtain an exact formula for the AHTs of random walks on $\Cay(\ZZ_N,\{+1,+2\})$, so we need to solve the above matrix equation.

\subsection{Proof of Theorem~\ref{main2}}

To determine the AHTs, it suffices to solve the matrix equation $H_N\vec{h}=(p+q)\vec{1}$.
First, we utilize $LU$ decomposition to analyze the coefficient matrix $H_N$.

\begin{df}[$LU$ decomposition]
Let $M_N$ denote the set of all $N\times N$ matrices over $\CC$.
For $A\in M_N$, a factorization $A=LU$, where $L\in M_N$ is a lower triangular matrix and $U\in M_N$ is an upper triangular matrix, is called an $LU$ decomposition of $A$.
\end{df}

Let $p+q=1$, and let $P_N$ be the matrix
\[
P_N:=\begin{bmatrix}
0&1&0&0&\cdots&0 \\
1&0&0&0&\cdots&0 \\
0&0&1&0&\cdots&0 \\
\vdots&\ddots&\ddots&\ddots&\ddots&\vdots \\
0&\cdots&0&0&1&0\\
0&\cdots&0&0&0&1
\end{bmatrix}.
\]

Let $L_N$ be the matrix
\[
L_N:=\begin{bmatrix}
1&0&0&0&\cdots&0&0\\
-\frac{1}{p}&1&0&0&0&\cdots&0\\
-\frac{-1+p}{p}&-1+p-p^2&1&0&0&\cdots&0\\
0&p(-1+p)&-p&1&0&\ddots&\vdots\\
0&0&-1+p&-p&1&\ddots&\vdots\\
\vdots&\vdots&\ddots&\ddots&\ddots&\ddots&0\\
0&0&\cdots&0&-1+p&-p&1
\end{bmatrix}.
\]

Then, its inverse $L_N^{-1}$ is 
\[
L_N^{-1}=\begin{bmatrix}
1&0&0&0&\cdots&0&0\\
\frac{1}{p}&1&0&0&0&\cdots&0\\
\frac{(p-1)^2-1}{p-2}&\frac{(p-1)^3-1}{p-2}&1&0&0&\cdots&0\\
\frac{(p-1)^3-1}{p-2}&\frac{(p-1)^4-1}{p-2}&\frac{(p-1)^2-1}{p-2}&1&0&\ddots&\vdots\\
\frac{(p-1)^4-1}{p-2}&\frac{(p-1)^5-1}{p-2}&\frac{(p-1)^3-1}{p-2}&\frac{(p-1)^2-1}{p-2}&1&\ddots&\vdots\\
\vdots&\vdots&\ddots&\ddots&\ddots&\ddots&0\\
\frac{(p-1)^{N-2}-1}{p-2}&\frac{(p-1)^{N-1}-1}{p-2}&\frac{(p-1)^{N-3}-1}{p-2}&\cdots&\frac{(p-1)^3-1}{p-2}&\frac{(p-1)^2-1}{p-2}&1
\end{bmatrix}.
\]

Let $U_N$ be the matrix
\[
U_N:=\begin{bmatrix}
-p&1&0&\cdots&0&0\\
0&\frac{1}{p}&0&\cdots&0&-1+p\\
0&0&1&\ddots&\vdots&\frac{(p-1)^{4}-p+1}{p-2}\\
0&0&0&\ddots&0&\vdots\\
\vdots&\vdots&\ddots&\ddots&1&\frac{(p-1)^{N}-p+1}{p-2}\\
0&0&\cdots&0&0&\frac{(p-1)^{N}-1}{p-2}
\end{bmatrix}.
\]

Then, its inverse $U_N^{-1}$ is 
\[
U_N^{-1}=\begin{bmatrix}
-\frac{1}{p}&1&0&\cdots&0&\frac{(p-1)^2-p+1}{1-(p-1)^N}\\
0&p&0&\cdots&0&\frac{(p-1)^3-p+1}{1-(p-1)^N}\\
0&0&1&\ddots&\vdots&\frac{(p-1)^4-p+1}{1-(p-1)^N}\\
0&0&0&\ddots&0&\vdots\\
\vdots&\vdots&\ddots&\ddots&1&\frac{(p-1)^{N}-p+1}{1-(p-1)^N}\\
0&0&\cdots&0&0&\frac{p-2}{(p-1)^N-1}
\end{bmatrix}.
\]

Then we have the following.
\begin{thm}\label{decomposition2}
$H_N=P_NL_NU_N$.
\end{thm}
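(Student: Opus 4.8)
The plan is to verify the factorization $H_N = P_N L_N U_N$ by direct computation, exploiting the sparse, banded structure of all three factors. First I would observe that $P_N$ is a permutation matrix that swaps only the first two coordinates, so left-multiplication by $P_N$ merely exchanges rows $1$ and $2$; equivalently, it suffices to show $P_N H_N = L_N U_N$, i.e.\ that $L_N U_N$ equals the matrix obtained from $H_N = {}^t\Lap''_{\Cay(\ZZ_N,\{+1,+2\})}$ by swapping its first two rows. Writing out $H_N$ explicitly from the defining relation $(p+q)h_N(0,\ell) - p\,h_N(0,\ell-1) - q\,h_N(0,\ell-2) = p+q$ with $p+q=1$, the matrix $H_N$ has $1$ on the diagonal, $-p$ on the first subdiagonal, $-(1-p)$ on the second subdiagonal, and (because the index $\ell$ lives in $\ZZ_N$) two wrap-around entries in the last column coming from $\ell=1$ and $\ell=2$. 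So $P_N H_N$ is this same banded matrix with rows $1$ and $2$ interchanged.

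Next I would compute the product $L_N U_N$ entrywise. Since $L_N$ is lower triangular and bidiagonal-plus-a-bit (its strictly-lower part below the third row consists only of the entries $-p$ on the first subdiagonal and $-1+p$ on the second subdiagonal, with the first three columns slightly special), and $U_N$ is upper triangular with $1$'s on most of the diagonal, a $1$ on the first superdiagonal in row $1$, and a populated last column, the product $L_N U_N$ is again banded with a full last column. The computation splits into a handful of cases: (a) the top-left $3\times 3$ corner, where the special entries of $L_N$ and $U_N$ interact — here one checks the chosen values $-\tfrac1p$, $-\tfrac{-1+p}{p}$, $-1+p-p^2$, $p(-1+p)$, $\tfrac1p$ in $L_N$ together with $-p$, $1$, $\tfrac1p$, $-1+p$ in $U_N$ reproduce exactly the swapped rows; (b) the generic interior band, where each entry of $L_N U_N$ is a two- or three-term sum like $(-1+p)\cdot 1 + (-p)\cdot 1$ or $1\cdot 1 + (-p)\cdot 0 + \dots$ that collapses to the required $1$, $-p$, or $-1+p$; and (c) the last column, where the entries of $U_N$ (the telescoping expressions $\tfrac{(p-1)^k - p + 1}{p-2}$ and $\tfrac{(p-1)^N - 1}{p-2}$) combine with the subdiagonal entries of $L_N$ via a telescoping identity so that the running sum collapses to the single wrap-around entries of $P_N H_N$ in rows $1$ and $2$ and to $0$ elsewhere.

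The main obstacle, and the only genuinely delicate part, is case (c): showing that the last column of $L_N U_N$ matches. This is where the seemingly mysterious closed forms $\tfrac{(p-1)^k - p + 1}{p-2}$ enter, and it amounts to a telescoping sum identity. Concretely, for a generic row $i$ (with $3 \le i \le N-2$, say), the $(i,N)$ entry of $L_N U_N$ is $(-1+p)\cdot u_{i-2,N} + (-p)\cdot u_{i-1,N} + 1\cdot u_{i,N}$ where $u_{k,N} = \tfrac{(p-1)^{k}-p+1}{p-2}$ for $k<N$ and $u_{N,N} = \tfrac{(p-1)^N-1}{p-2}$; I would verify that $(1-p)(p-1)^{i-2} - p(p-1)^{i-1} + (p-1)^i = 0$ and that the constant parts $-p+1$ cancel against $(-1+p) + (-p) + 1 = 0$ times the constant, giving $0$ as required in the interior. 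For rows $1$ and $2$ the sum does not fully telescope and instead leaves precisely the wrap-around weights; one must also check the boundary rows near the bottom where $u_{N,N}$ has the different numerator. I would also note as a sanity check that the diagonal of $U_N$ has product equal to $\det H_N$ up to sign, which (since $\det$ of the Laplacian minor counts weighted spanning trees) should be $\pm\bigl(1-(p-1)^N\bigr)$ up to normalization — consistent with the $\tfrac{(p-1)^N-1}{p-2}$ appearing in the corner. Rather than present every case, in the write-up I would display the generic interior computation and the last-column telescoping identity in full, then remark that the finitely many boundary and corner cases are checked the same way.
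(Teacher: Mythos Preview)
Your approach is essentially the same as the paper's: the paper also defines $S_N:=P_N L_N U_N$, computes its entries by case analysis (the corner block, the generic band entries, and the last column, where exactly the telescoping identity $(p-1)u_{i-2}-p\,u_{i-1}+u_i=0$ you describe is used), and checks that $S_N=H_N$. One small correction: $H_N$ has only a single wrap-around entry in its last column, namely $H_N(1,N-1)=-1+p$, since the $\ell=2$ equation loses its $(1-p)h_N(0,0)$ term rather than wrapping; this just means your case (c) leaves a nonzero residue only in row $2$ of $L_NU_N$ (row $1$ of $P_NL_NU_N$), not in both rows.
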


\begin{proof}

Let $S_N$ denote the matrix $P_NL_NU_N$.
By simple matrix computation, the entries of $S_N$ can be calculated as 
\[
S_N(i,j)=\begin{cases}
1 &\text{if $i=j\neq N-1$,}\\
-p &\text{if $i=j+1$,\;$4\leq i\leq N-1$,}\\
-1+p &\text{if $i=j+2$,\;$3\leq i\leq N-1$,}\\
 &\text{   $(i,j)=(1,N-1)$,}\\
-\frac{1}{p}+\frac{1}{p} &\text{if $(i,j)=(1,2)$,}\\
-\frac{-1+p}{p}+\frac{-1+p-p^2}{p} &\text{if $(i,j)=(3,2)$,}\\
(-1+p)(-1+p-p^2)+\frac{(p-1)^4-p+1}{p-2} &\text{if $(i,j)=(3,N-1)$,}\\
p(-1+p)^2-\frac{p((p-1)^4-p+1)}{p-2}+\frac{(p-1)^5-p+1}{p-2} &\text{if $(i,j)=(4,N-1)$,}\\
(-1+p)\frac{(p-1)^{i-1}-p+1}{p-2}-p\frac{(p-1)^i-p+1}{p-2}+\frac{(p-1)^{i+1}-p+1}{p-2}  &\text{if $j=N-1$,\;$5\leq i\leq N-2$,}\\
(-1+p)\frac{(p-1)^{N-2}-p+1}{p-2}-p\frac{(p-1)^{N-1}-p+1}{p-2}+\frac{(p-1)^N-1}{p-2} &\text{if $i=j=N-1$,}\\
0 &\text{otherwise.}\\
\end{cases}
\]

For the case where $(i,j)=(1,2)$, we have
\[
S_N(i,j)=-\frac{1}{p}+\frac{1}{p}=0.
\]

For the case where $(i,j)=(3,2)$, we have
\[
S_N(i,j)=-\frac{-1+p}{p}+\frac{-1+p-p^2}{p}=-p.
\]

For the case where $(i,j)=(3,N-1)$, we have
\begin{align*}
S_N(i,j)&=(-1+p)(-1+p-p^2)+\frac{(p-1)^4-p+1}{p-2}\\
&=\frac{(p-1)^2(p-2)-p^2(p-1)(p-2)}{p-2}+\frac{(p-1)^4-p+1}{p-2}\\
&=\frac{(p-1)(p^2-3p+2-(p^3-2p^2)+(p^3-3p^2+3p-1)-1)}{p-2}\\
&=0.
\end{align*}

For the case where $(i,j)=(4,N-1)$, we have
\begin{align*}
S_N(i,j)&=p(-1+p)^2-\frac{p((p-1)^4-p+1)}{p-2}+\frac{(p-1)^5-p+1}{p-2}\\
&=\frac{p(p-1)^2(p-2)}{p-2}+\frac{-p(p-1)^4+p(p-1)+(p-1)^5-(p-1)}{p-2}\\
&=\frac{p(p-1)^2(p-2)-(p-1)^4+(p-1)^2}{p-2}\\
&=0.
\end{align*}

For the case where $j=N-1$,\;$5\leq i\leq N-2$, we have
\begin{align*}
S_N(i,j)&=(-1+p)\frac{(p-1)^{i-1}-p+1}{p-2}-p\frac{(p-1)^i-p+1}{p-2}+\frac{(p-1)^{i+1}-p+1}{p-2}\\
&=\frac{(p-1)^i-(p-1)^2-p(p-1)^i+p(p-1)+(p-1)^{i+1}-(p-1)}{p-2}\\
&=\frac{-(p-1)^{i+1}-(p-1)^2+(p-1)^2+(p-1)^{i+1}}{p-2}\\
&=0.
\end{align*}

For the case where $i=j=N-1$, we have
\begin{align*}
S_N(i,j)&=(-1+p)\frac{(p-1)^{N-2}-p+1}{p-2}-p\frac{(p-1)^{N-1}-p+1}{p-2}+\frac{(p-1)^N-1}{p-2}\\
&=\frac{(p-1)^{N-1}-(p-1)^2-p(p-1)^{N-1}+p(p-1)+(p-1)^N-1}{p-2}\\
&=\frac{-(p-1)^N-(p^2-2p+1)+p^2-p+(p-1)^N-1}{p-2}\\
&=1.
\end{align*}

Therefore, we have $S_N=H_N$.
\end{proof}

From Theorem~\ref{decomposition2}, the matrix equation $H_N \vec{h}=\vec{1}$ can be expressed by the single matrix equation 
\begin{equation*}
\vec{h}=
\begin{bmatrix}
h_N(0,1)\\
h_N(0,2)\\
\vdots\\
h_N(0,\ell)\\
\vdots\\
h_N(0,N-1)
\end{bmatrix}
=U_N^{-1}L_N^{-1}
\begin{bmatrix}
1\\
1\\
\vdots\\
1\\
\vdots\\
1
\end{bmatrix}.
\end{equation*}

Solving this matrix equation, we obtain an exact formula for $h_N(0,\ell)$. 

\begin{thm}\label{thm:+134}
For $1\leq \ell\leq N-1$, an exact formula for the AHTs of random walks on the weighted Cayley graph $\Cay(\ZZ_N,\{+1,+2\})$ is
\[
h_N(0,\ell)=\frac{N(p-1)((p-1)^\ell-1)-\ell((p-1)^N-1)}{(p-2)((p-1)^N-1)}.
\]
\end{thm}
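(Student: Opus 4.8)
The plan is to solve the triangular system $\vec{h} = U_N^{-1} L_N^{-1} \vec{1}$ in two stages, exploiting the explicit forms of $L_N^{-1}$ and $U_N^{-1}$ given above. First I would compute $\vec{y} := L_N^{-1}\vec{1}$, i.e.\ the row sums of $L_N^{-1}$. Reading off the displayed matrix, the $\ell$-th entry ($\ell \geq 3$) is $1 + \frac{1}{p-2}\sum_{k}((p-1)^{a_k}-1)$ where the exponents $a_k$ run over the appropriate window of values appearing in row $\ell$; the first two entries are $1$ and $1+\frac1p = \frac{p+1}{p}$. The key observation is that each row of $L_N^{-1}$ is, up to the leading $1$ and the two anomalous early columns, a contiguous run of terms $\frac{(p-1)^m - 1}{p-2}$, so the row sum telescopes/collapses into a short geometric sum. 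I expect $y_\ell$ to simplify to a clean closed form, something of the shape $y_\ell = \alpha + \beta (p-1)^{\ell} + \gamma \ell$ (plus lower-order corrections absorbed by the two special columns), because the target vector $\vec 1$ is constant and geometric sums of $(p-1)^m$ again produce a geometric term.

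Next I would apply $U_N^{-1}$ on the left: since $U_N^{-1}$ is upper triangular with a particularly sparse structure (diagonal entries $-\tfrac1p, p, 1, 1, \dots, \tfrac{p-2}{(p-1)^N-1}$, a superdiagonal-type entry in the top two rows, and a full last column $\frac{(p-1)^{i+1}-p+1}{1-(p-1)^N}$ for $i \le N-1$), the product $U_N^{-1}\vec y$ has $\ell$-th entry equal to (diagonal contribution) $+$ (contribution of the last column) $+$ (the sparse off-diagonal terms in rows $1,2$). Concretely $h_N(0,\ell) = (U_N^{-1})(\ell,\ell)\,y_\ell + (U_N^{-1})(\ell,N-1)\,y_{N-1}$ for $3 \le \ell \le N-2$, with rows $1,2$ and $N-1$ handled separately. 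Substituting the closed form for $y_\ell$ and $y_{N-1}$ and simplifying over the common denominator $(p-2)((p-1)^N-1)$ should collapse everything to the claimed
\[
h_N(0,\ell)=\frac{N(p-1)((p-1)^\ell-1)-\ell((p-1)^N-1)}{(p-2)((p-1)^N-1)}.
\]

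The main obstacle is bookkeeping rather than conceptual: one must track the exponent windows in each row of $L_N^{-1}$ correctly (they are not simply $2,3,\dots$ but include the "shifted" pairs $(p-1)^{i-j}$ and $(p-1)^{i-j+1}$ visible in the first two columns), and one must verify the boundary rows $\ell \in \{1,2,N-1\}$ by hand since they do not follow the generic pattern. A secondary check is consistency: the formula must give $h_N(0,\ell)>0$ for $0<p<1$ and should degenerate correctly as $p\to\tfrac12$ (where the denominator factor $p-2$ is harmless but $(p-1)^N-1 \to (-\tfrac12)^N - 1$ stays nonzero), which provides a useful sanity check on signs. Alternatively — and this is the verification I would actually write up if the direct summation gets unwieldy — one can simply \emph{verify} the claimed formula by plugging $\vec h$ with $h_N(0,\ell)$ as above directly into the three-term recurrence $(p+q)h_N(0,\ell)-ph_N(0,\ell-1)-qh_N(0,\ell-2)=p+q$ (with $p+q=1$) together with the boundary/cyclic conditions $h_N(0,0)=0$ and the wrap-around relation, which reduces to checking a single polynomial identity in $(p-1)$ and is essentially immediate; uniqueness of the solution to the nonsingular system $H_N\vec h=\vec 1$ then finishes the proof.
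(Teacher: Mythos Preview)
Your primary approach---compute $\vec y = L_N^{-1}\vec 1$ as the row sums of $L_N^{-1}$, observe that for $\ell\ge 3$ these collapse to $y_\ell = \sum_{i=1}^{\ell}\frac{(p-1)^i-1}{p-2}$ (using $1=\frac{(p-1)-1}{p-2}$ for the diagonal entry), then apply the sparse $U_N^{-1}$ row by row, treating $\ell\in\{1,2,N-1\}$ separately---is exactly what the paper does; your expected shape $\alpha+\beta(p-1)^\ell+\gamma\ell$ for $y_\ell$ is correct and the paper's computation confirms it. Your alternative route of directly verifying the closed form against the three-term recurrence and invoking invertibility of $H_N$ for uniqueness is a genuine shortcut the paper does not take: it trades the explicit matrix inversions for a one-line polynomial identity check, at the cost of not producing $H_N^{-1}$ as a by-product (which the paper records after the proof).
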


\begin{proof}
For the case where $\ell=1$, we have
\begin{align*}
h_N(0,1)&=1+\frac{(p-1)^2-p+1}{1-(p-1)^N}\sum_{i=1}^{N-1}\frac{(p-1)^i-1}{p-2}\\
&=1+\frac{(p-1)^2-p+1}{(p-2)(1-(p-1)^N)}(\frac{(p-1)(1-(p-1)^{N-1})}{1-(p-1)}-N+1)\\
&=1-\frac{p-1}{(p-1)^N-1}(\frac{(p-1)^N-1}{p-2}-N)\\
&=\frac{(p-2)((p-1)^N-1)-(p-1)((p-1)^N-1)+N(p-1)(p-2)}{(p-2)((p-1)^N-1)}\\
&=\frac{N(p-1)((p-1)^1-1)-((p-1)^N-1)}{(p-2)((p-1)^N-1)}.
\end{align*}

For the case where $\ell=2$, we have
\begin{align*}
h_N(0,2)&=1+p+\frac{(p-1)^3-p+1}{1-(p-1)^N}\sum_{i=1}^{N-1}\frac{(p-1)^i-1}{p-2}\\
&=1+p+\frac{(p-1)^3-p+1}{(p-2)(1-(p-1)^N)}(\frac{(p-1)(1-(p-1)^{N-1})}{1-(p-1)}-N+1)\\
&=1+p-\frac{(p-1)((p-1)^2-1)}{(p-2)((p-1)^N-1)}(\frac{(p-1)^N-1}{p-2}-N)\\
&=\frac{(p+1)(p-2)^2}{(p-2)^2}-\frac{(p-1)((p-1)^2-1)}{(p-2)^2}+\frac{N(p-1)((p-1)^2-1)}{(p-2)((p-1)^N-1)}\\
&=\frac{(p+1)(p-2)^2}{(p-2)^2}-\frac{p(p-1)(p-2)}{(p-2)^2}+\frac{N(p-1)((p-1)^2-1)}{(p-2)((p-1)^N-1)}\\
&=\frac{(p+1)(p-2)-p(p-1)}{(p-2)}+\frac{N(p-1)((p-1)^2-1)}{(p-2)((p-1)^N-1)}\\
&=\frac{N(p-1)((p-1)^2-1)-2((p-1)^N-1)}{(p-2)((p-1)^N-1)}.
\end{align*}

For the case where $3\leq \ell\leq N-2$, we have
\begin{align*}
h_N(0,\ell)&=\sum_{i=1}^\ell\frac{(p-1)^i-1}{p-2}+\frac{(p-1)^{\ell+1}-p+1}{1-(p-1)^N}\sum_{i=1}^{N-1}\frac{(p-1)^i-1}{p-2}\\
&=\frac{1}{p-2}(\sum_{i=1}^\ell(p-1)^i-l)+\frac{(p-1)^{\ell+1}-p+1}{(p-2)(1-(p-1)^N)}(\sum_{i=1}^{N-1}(p-1)^i-N+1)\\
&=\frac{1}{p-2}(\frac{(p-1)(1-(p-1)^\ell)}{1-(p-1)}-\ell\\
&~+\frac{(p-1)^{\ell+1}-p+1}{1-(p-1)^N}(\frac{(p-1)(1-(p-1)^{N-1})}{1-(p-1)}-N+1))\\
&=\frac{1}{p-2}(\frac{(p-1)^{\ell+1}-p+1}{p-2}-\ell\\
&~-\frac{(p-1)^{\ell+1}-p+1}{(p-1)^N-1}(\frac{(p-1)^N-p+1}{p-2}-N+1))\\
&=\frac{1}{p-2}(\frac{(p-1)^{\ell+1}-p+1}{p-2}-\ell-\frac{(p-1)^{\ell+1}-p+1}{(p-1)^N-1}(\frac{(p-1)^N-1}{p-2}-N))\\
&=\frac{N(p-1)((p-1)^\ell-1)-\ell((p-1)^N-1)}{(p-2)((p-1)^N-1)}.
\end{align*}

For the case where $\ell=N-1$, we have
\begin{align*}
h_N(0,N-1)&=\frac{p-2}{(p-1)^{N}-1}\sum_{i=1}^{N-1}\frac{(p-1)^i-1}{p-2}\\
&=\frac{1}{(p-1)^{N}-1}(\frac{(p-1)(1-(p-1)^{N-1})}{1-(p-1)}-N+1)\\
&=\frac{-N(p-2)+(p-1)^N-1}{(p-2)((p-1)^N-1)}\\
&=\frac{N((p-1)^N-p+1)-N((p-1)^N-1)+(p-1)^N-1}{(p-2)((p-1)^N-1)}\\
&=\frac{N(p-1)((p-1)^{N-1}-1)-(N-1)((p-1)^N-1)}{(p-2)((p-1)^N-1)}.
\end{align*}
\end{proof}

From Theorem~\ref{decomposition2}, we have $H_N^{-1}=U_N^{-1}L_N^{-1}P_N^{-1}$, and so the entries of $H^{-1}_N$ can also be calculated as follows. 
For $1\leq i,j\leq N-1$, we have 

\noindent
\footnotesize
\begin{align*}
&H_N^{-1}(i,j)
=\frac{1}{(p-2)((p-1)^N-1)}\\
&~\begin{cases}
(p-2)^2 &\text{if $i=j=1$,}\\
(p-2)((p-1)^N-p(p-1)^{N-1}+p^2-p-1) &\text{if $i=j=2$,}\\
-(p-2)((p-1)^{N-1}-p+1) &\text{if $(i,j)=(1,2)$,}\\
p(p-2)^2 &\text{if $(i,j)=(2,1)$,}\\
-(p-2)((p-1)^{N-j+1}-p+1) &\text{if $i=1,\;3\leq j\leq N-1$,}\\
-p(p-2)((p-1)^{N-j+1}-p+1) &\text{if $i=2,\;3\leq j\leq N-1$,}\\
(p-2)((p-1)^i-1) &\text{if $3\leq i\leq N-2,\;j=1$,}\\
(p-2)(p(p-1)^{i-1}-(p-1)^{N-1}-1) &\text{if $3\leq i\leq N-2,\;j=2$,}\\
-((p-1)^i-1)((p-1)^{N-j+1}-p+1) &\text{if $3\leq i<j\leq N-1$,}\\
(p-2)((p-1)^i-1)+((p-1)^{-j+1}-1)((p-1)^{N}-(p-1)^{i}) &\text{if $3\leq j\leq i\leq N-2$,}\\
(p-2)((p-1)^{N-j}-1) &\text{if $i=N-1$.}
\end{cases}
\end{align*}
\normalsize

\section{Application: Effective Resistance and Kirchhoff Index}

The {\it effective resistance} between a pair of vertices $u$ and $v$ of a weighted graph $G=(V,E)$ is the electrical resistance seen between the vertices $u$ and $v$ of a resistor network with branch conductances given by the edge weights, denoted by $r(G;u,v)$.

In 1996, Chandra et al.~\cite{CRRST} proved the following formula relating the effective resistances and the AHTs.

\begin{thm}[Chandra et al., 1996~\cite{CRRST}]
\[
r(G;u,v)=\frac{h(G;u,v)+h(G;v,u)}{\sum_{(x,y)\in V\times V}w((x,y))}.
\]
\end{thm}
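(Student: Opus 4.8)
The plan is to reproduce the classical electrical-network argument of \cite{CRRST}. View $G$ as a resistor network in which the edge $(x,y)$ has conductance $w((x,y))$, equivalently resistance $1/w((x,y))$, and abbreviate $\mathrm{vol}(G):=\sum_{(x,y)\in V\times V}w((x,y))=\sum_{x\in V}W(x)$. The first and main step is the ``voltage equals hitting time'' identity: if the network is driven by injecting $W(x)$ units of current at every vertex $x$ and withdrawing $\mathrm{vol}(G)$ units at a fixed sink $v$, then the node potentials $\phi$, normalized so that $\phi(v)=0$, satisfy $\phi(x)=h(G;x,v)$ for all $x\in V$. Indeed, Kirchhoff's current law at a vertex $x\neq v$ reads $\sum_{y\in V}w((x,y))\bigl(\phi(x)-\phi(y)\bigr)=W(x)$, which rearranges to $\phi(x)=1+\sum_{y\in V}\frac{w((x,y))}{W(x)}\phi(y)$; this is precisely the one-step recurrence characterizing the hitting times $h(G;x,v)$ subject to $h(G;v,v)=0$, and since $G$ is connected this recurrence has a unique solution, whence $\phi\equiv h(G;\cdot\,,v)$.

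Next I would apply the same identity with the roles of source and sink exchanged. Injecting $\mathrm{vol}(G)$ at $u$ while withdrawing $W(x)$ at every vertex $x$ is, by linearity of the network equations, exactly the negative of the configuration above with sink $u$; hence the corresponding potentials $\psi$ satisfy $\psi(x)=-h(G;x,u)+c$ for some constant $c$.

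Now superpose the two current configurations. At every vertex $x\notin\{u,v\}$ the injections $W(x)$ and $-W(x)$ cancel, at $u$ the net injection is $\mathrm{vol}(G)$, and at $v$ it is $-\mathrm{vol}(G)$; so the superposed flow is $\mathrm{vol}(G)$ units of current sent from $u$ to $v$, whose potential drop equals $\mathrm{vol}(G)\cdot r(G;u,v)$ by the definition of effective resistance. Computing the same drop from the superposed potential $\phi+\psi$ gives $\bigl(\phi(u)-\phi(v)\bigr)+\bigl(\psi(u)-\psi(v)\bigr)=h(G;u,v)+\bigl(h(G;v,u)-h(G;u,u)\bigr)=h(G;u,v)+h(G;v,u)$, using $h(G;v,v)=h(G;u,u)=0$. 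Equating the two expressions for the drop and dividing by $\mathrm{vol}(G)$ yields the claimed formula.

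I expect the only genuinely delicate point to be the first step: one must check that the discrete harmonic (Kirchhoff) equations coincide on the nose with the hitting-time recurrence, keep track of the single additive constant by which the potentials are undetermined, and invoke connectedness (irreducibility of the walk) to pin down the unique solution; the reversibility of the random walk on an undirected weighted graph is exactly what makes this electrical interpretation legitimate. Everything afterward---the superposition bookkeeping and the two evaluations of the $u$-to-$v$ potential drop---is routine. Alternatively, one can derive the identity purely algebraically by writing both $r(G;u,v)$ and $h(G;u,v)+h(G;v,u)$ in terms of the Moore--Penrose pseudoinverse of the weighted Laplacian $\Lap_G$, but the network argument is shorter and is the one originally given in \cite{CRRST}.
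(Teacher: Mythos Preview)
The paper does not give its own proof of this statement: it is quoted as a known result of Chandra et al.\ \cite{CRRST} and used as a black box in Section~5. Your argument is correct and is essentially the original electrical-network proof from \cite{CRRST}, so there is nothing to compare against; the only remark worth making is that the identity, and your proof of it, rely on the random walk being reversible (undirected weighted graph), which you rightly flag.
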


Let $V=\{v_1,v_2,\ldots,v_n\}$.
The {\it Kirchhoff index} $\Kf(G)$ of the 
graph $G$
 is defined as $\Kf(G):=\frac{1}{2}\sum_{u,v\in V}r(G;u,v)$.

We can obtain the following theorems.

\begin{thm}
We define a weight $w(\{v_i,v_j\})$ on the weighted undirected Cayley graph $\Cay(\ZZ_{2n},\{\pm1\})$ as follows:
\[
w(\{v_i,v_j\})=
\begin{cases}
p &\text{if $j-i=1$, $i$ is even,}\\
q &\text{if $j-i=1$, $i$ is odd,}\\
0 &\text{otherwise,}
\end{cases}
\]
where $i,j\in \ZZ$ and $v_i=i+2n\ZZ\in\ZZ_{2n}$.

Then, 
\[
\Kf(\Cay(\ZZ_{2n},\{\pm1\}))=\frac{n(n-1)(n+1)+3np(1-p)}{3p(1-p)}.
\]
\end{thm}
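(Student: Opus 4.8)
The plan is to feed the exact hitting-time formulas of Section~3 into the identity of Chandra et al.~\cite{CRRST} and then sum over all pairs of vertices. The first step is to record that for $G=\Cay(\ZZ_{2n},\{\pm1\})$ with the weight prescribed above one has $\sum_{(x,y)\in V\times V}w((x,y))=2(np+nq)=2n$, since there are exactly $n$ edges of weight $p$ and $n$ edges of weight $q$, each counted twice in a sum over ordered pairs, and $p+q=1$. Hence $r(G;a,b)=\frac{1}{2n}\bigl(h_{2n}(a,b)+h_{2n}(b,a)\bigr)$ for distinct vertices $a,b\in\ZZ_{2n}$, and since $\sum_{a<b}\bigl(h_{2n}(a,b)+h_{2n}(b,a)\bigr)=\sum_{a\neq b}h_{2n}(a,b)$, this gives
\[
\Kf(\Cay(\ZZ_{2n},\{\pm1\}))=\sum_{a<b}r(G;a,b)=\frac{1}{2n}\sum_{a\neq b}h_{2n}(a,b).
\]

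Next I would reduce this double sum to the single sums already computed in Section~3 using the translation symmetries $h_{2n}(2k,2k+\ell)=h_{2n}(0,\ell)$ and $h_{2n}(2k+1,2k+\ell)=h_{2n}(1,\ell)$. For a fixed even vertex $a$, as $b$ runs over the remaining $2n-1$ vertices the difference $b-a$ runs through $\ZZ_{2n}\setminus\{0\}$, so $\sum_{b\neq a}h_{2n}(a,b)=\sum_{\ell=1}^{2n-1}h_{2n}(0,\ell)$. For a fixed odd vertex $a=2k+1$ one has $h_{2n}(a,b)=h_{2n}(1,b-2k)=h_{2n}\bigl(1,(b-a)+1\bigr)$, and as $b-a$ runs through $\ZZ_{2n}\setminus\{0\}$ the argument $(b-a)+1$ runs through $\{0,2,3,\dots,2n-1\}$ in $\ZZ_{2n}$, so $\sum_{b\neq a}h_{2n}(a,b)=\sum_{\ell=1}^{2n-1}h_{2n}(1,\ell+1)$, which is exactly the quantity for which Section~3 provides a closed form. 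Since $\Cay(\ZZ_{2n},\{\pm1\})$ has $n$ even and $n$ odd vertices, we obtain
\[
\Kf(\Cay(\ZZ_{2n},\{\pm1\}))=\frac{1}{2}\left(\sum_{\ell=1}^{2n-1}h_{2n}(0,\ell)+\sum_{\ell=1}^{2n-1}h_{2n}(1,\ell+1)\right).
\]

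Finally I would substitute the explicit formulas for $h_{2n}(0,\ell)$ and $h_{2n}(1,\ell+1)$ and evaluate. Splitting by the parity of $\ell$: for even $\ell$ one has $h_{2n}(0,\ell)+h_{2n}(1,\ell+1)=\frac{\ell(2n-\ell)}{2p(1-p)}$, while for odd $\ell$ the identity $(1-p)(n-\ell+p)+p(n-\ell+1-p)=(n-\ell)+2p(1-p)$ gives $h_{2n}(0,\ell)+h_{2n}(1,\ell+1)=\frac{2(\ell-1)(2n-\ell+1)+4(n-\ell)+8p(1-p)}{4p(1-p)}$. Putting $\ell=2k$ with $1\le k\le n-1$ in the even sum and $\ell=2k-1$ with $1\le k\le n$ in the odd sum, and using $\sum_{k=1}^{m}k=\frac{m(m+1)}{2}$ and $\sum_{k=1}^{m}k^2=\frac{m(m+1)(2m+1)}{6}$, one finds that the linear contribution $\sum_{k=1}^{n}(n+1-2k)$ vanishes, that both quadratic contributions equal $\frac{4n(n^2-1)}{3}$, and that the constant accumulates to $8np(1-p)$; collecting terms yields $\sum_{\ell=1}^{2n-1}\bigl(h_{2n}(0,\ell)+h_{2n}(1,\ell+1)\bigr)=\frac{2n(n^2-1)}{3p(1-p)}+2n$, and therefore
\[
\Kf(\Cay(\ZZ_{2n},\{\pm1\}))=\frac{n(n^2-1)}{3p(1-p)}+n=\frac{n\bigl((n-1)(n+1)+3p(1-p)\bigr)}{3p(1-p)},
\]
as claimed. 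The one place needing care is the reindexing $h_{2n}(a,b)=h_{2n}\bigl(1,(b-a)+1\bigr)$ at odd source vertices, which is what makes the second inner sum line up with the tabulated form $h_{2n}(1,\ell+1)$ rather than $h_{2n}(1,\ell)$; everything else is a routine evaluation of arithmetic series, so I do not expect any genuine obstacle.
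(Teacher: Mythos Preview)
Your proposal is correct and follows essentially the same route as the paper: apply the Chandra et al.\ identity with total weight $2n$, use the translation symmetries to reduce $\Kf$ to $\tfrac{1}{2}\bigl(\sum_{\ell}h_{2n}(0,\ell)+\sum_{\ell}h_{2n}(1,\ell+1)\bigr)$, and then substitute the explicit hitting-time formulas and sum by parity. Your organization of the final computation---combining $h_{2n}(0,\ell)+h_{2n}(1,\ell+1)$ for each $\ell$ before summing, so that the $(1-p)(n-\ell+p)+p(n-\ell+1-p)$ terms collapse immediately---is a mild streamlining of the paper's four-way split, but the argument is otherwise the same.
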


\begin{proof}

\begin{align*}
&\Kf(\Cay(\ZZ_{2n},\{\pm1\})) \\
&=\sum_{0\leq i<j\leq 2n-1}r(\Cay(\ZZ_{2n},\{\pm1\});v_i,v_j) \\
&=\sum_{0\leq i<j\leq 2n-1}\frac{h(\Cay(\ZZ_{2n},\{\pm1\});i,j)+h(\Cay(\ZZ_{2n},\{\pm1\});j,i)}{\sum_{0\leq k,\ell\leq 2n-1}w((v_k,v_\ell))} \\
&=\frac{1}{2n}\sum_{0\leq i\leq 2n-1}\sum_{0\leq j\leq 2n-1}h(\Cay(\ZZ_{2n},\{\pm1\});i,j) \\
&=\frac{1}{2n}\left( n\sum_{\ell=1}^{2n-1}h(\Cay(\ZZ_{2n},\{\pm1\});0,\ell)
+n\sum_{\ell=1}^{2n-1}h(\Cay(\ZZ_{2n},\{\pm1\});1,\ell+1) \right) \\
&=\frac{1}{2}( \sum_{\ell=1}^{n-1}h(\Cay(\ZZ_{2n},\{\pm1\});0,2\ell)
+\sum_{\ell=0}^{n-1}h(\Cay(\ZZ_{2n},\{\pm1\});0,2\ell+1) \\
&~+\sum_{\ell=1}^nh(\Cay(\ZZ_{2n},\{\pm1\});1,2\ell)
+\sum_{\ell=1}^{n-1}h(\Cay(\ZZ_{2n},\{\pm1\});1,2\ell+1) ) \\
&=\frac{1}{2p(1-p)}( \sum_{\ell=1}^{n-1}2\ell(2n-2\ell) \\
&~+\sum_{\ell=0}^{n-1}(2\ell(2n-(2\ell+1)+1)+4(1-p)(n-(2\ell+1)+p)) \\
&~+\sum_{\ell=1}^n((2\ell-2)(2n-(2\ell-1)+1)+4p(n-(2\ell-1)+1-p)) \\
&~+\sum_{\ell=1}^{n-1}2\ell(2n-2\ell) ) \\
&=\frac{1}{8p(1-p)}( 8\sum_{\ell=1}^{n-1}\ell(n-\ell)
+\sum_{\ell=0}^{n-1}(4\ell(n-\ell-2(1-p))+4(1-p)(n-1+p)) \\
&~+\sum_{\ell=1}^n(4\ell(n-\ell+2(1-p))-4(n+1)+4p(n+2-p)) \\
&=\frac{1}{2p(1-p)}( 4\sum_{\ell=1}^{n-1}\ell(n-\ell)-\sum_{\ell=1}^n(2p^2-4p+2)+2n(1-p)) \\
&=\frac{1}{p(1-p)}( n^2(n-1)-\frac{n(n-1)(2n-1)}{3}-n(p^2-2p+1)+n(1-p)) \\
&=\frac{n(n-1)(n+1)+3np(1-p)}{3p(1-p)}. \\ 
\end{align*}
\end{proof}

\begin{rem}
For an unweighted graph, the transition probabilities to the adjacent vertices are equal, which corresponds to setting $p=\frac{1}{2}$ in our weighted model.
However, we must account for the difference in the scaling of the graph weights.
In an unweighted graph, the total sum of the weights is $4n$, whereas our weighted model is normalized such that the total sum of the transition probabilities (weights) is $2n$.
By adjusting our formula evaluated at $p=\frac{1}{2}$ with a factor of $\frac{1}{2}$ to correct this scaling difference, it coincides exactly with the unweighted Kirchhoff index of the cycle graph~$C_{2n}$.
\end{rem}

\begin{thm}
We define a weight $w((v_i,v_j))$ on the weighted Cayley graph $\Cay(\ZZ_N,\{+1,+2\})$ as follows: 
\[
w((v_i,v_j))=
\begin{cases}
p &\text{if $j-i=1$,}\\
q &\text{if $j-i=2$,}\\
0 &\text{otherwise,}
\end{cases}
\]
where $i,j\in \ZZ$ and $v_i=i+N\ZZ\in\ZZ_N$.

Then, 
\begin{align*}
&\Kf(\Cay(\ZZ_N,\{+1,+2\}))= \\
&~\frac{N\left[ (p-1)^N(3p-4-N(p-2))-(N-1)(p-2)(2p-3)-3p+4 \right]}{2(p-2)^2((p-1)^N-1)}.
\end{align*}
\end{thm}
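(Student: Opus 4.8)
The plan is to mirror the proof of the Kirchhoff-index formula for $\Cay(\ZZ_{2n},\{\pm1\})$ given just above: use the Chandra et al.\ identity to convert $\Kf$ into a sum of average hitting times, collapse that sum using the vertex-transitivity (circulant structure) of $\Cay(\ZZ_N,\{+1,+2\})$, and then substitute the closed form of Theorem~\ref{thm:+134}.

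First I would compute the normalizing constant appearing in the Chandra et al.\ identity: each vertex $i$ of $\Cay(\ZZ_N,\{+1,+2\})$ carries exactly the two out-edges $(i,i+1)$ and $(i,i+2)$, of total weight $p+q=1$, so $\sum_{(x,y)\in V\times V}w((x,y))=N$. Hence, for every pair $i\ne j$,
\[
r(\Cay(\ZZ_N,\{+1,+2\});v_i,v_j)=\frac{h_N(i,j)+h_N(j,i)}{N},
\]
and summing over $0\le i<j\le N-1$ gives $\Kf=\frac1N\sum_{i\ne j}h_N(i,j)$, the sum now running over ordered pairs. By the symmetry $h_N(i,j)=h_N(0,j-i)$ (reading $j-i$ in $\ZZ_N$) recorded in Section~4, for each fixed $i$ the inner sum $\sum_{j\ne i}h_N(i,j)$ equals $\sum_{\ell=1}^{N-1}h_N(0,\ell)$, independently of $i$; therefore
\[
\Kf(\Cay(\ZZ_N,\{+1,+2\}))=\sum_{\ell=1}^{N-1}h_N(0,\ell).
\]

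Next I would substitute the formula of Theorem~\ref{thm:+134}, which is a single expression valid for all $1\le\ell\le N-1$, so the sum needs no case analysis. Over the common denominator $(p-2)((p-1)^N-1)$ the numerator becomes
\[
N(p-1)\sum_{\ell=1}^{N-1}\bigl((p-1)^\ell-1\bigr)-\bigl((p-1)^N-1\bigr)\sum_{\ell=1}^{N-1}\ell ,
\]
and the three elementary sums needed are the geometric sum $\sum_{\ell=1}^{N-1}(p-1)^\ell=\frac{(p-1)^N-(p-1)}{p-2}$ (legitimate since $p\ne2$), the count $\sum_{\ell=1}^{N-1}1=N-1$, and $\sum_{\ell=1}^{N-1}\ell=\frac{N(N-1)}{2}$. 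The geometric sum contributes a further factor $p-2$ to the denominator and the arithmetic sum a factor $2$, which accounts for the claimed denominator $2(p-2)^2((p-1)^N-1)$.

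The main obstacle is the algebraic simplification that follows. After clearing denominators one is left with a polynomial identity in $p$, $N$, and $(p-1)^N$; the cleanest way to handle it is to set $b:=(p-1)^N$, write the numerator (after pulling out the overall factor $N$) in the form $N\bigl(\alpha(p)+\beta(p)\,b-N(p-2)(b+2p-3)\bigr)$ with $\alpha,\beta$ explicit polynomials, and then match the coefficients of $b$, of the constant term, and of $N$ against the asserted expression. The sign bookkeeping in the term $-\ell((p-1)^N-1)$ and in the closed form of the geometric series is where slips are easiest, so I would double-check the outcome at a small instance — e.g.\ $N=3$, where $\Cay(\ZZ_3,\{+1,+2\})$ has only three effective resistances, each computable by hand — before declaring the identity proved. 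Apart from this computation the argument is entirely routine and runs in parallel with the proof of the preceding theorem.
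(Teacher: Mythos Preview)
Your proposal is correct and follows essentially the same route as the paper: both invoke the Chandra et al.\ identity with total edge weight $N$, collapse the double sum to $\sum_{\ell=1}^{N-1}h_N(0,\ell)$ via the circulant symmetry, substitute Theorem~\ref{thm:+134}, and evaluate the resulting geometric and arithmetic sums. The only cosmetic difference is that you pass directly to the ordered-pair sum $\frac1N\sum_{i\ne j}h_N(i,j)$, whereas the paper first counts how many unordered pairs $\{i,j\}$ have $j-i\equiv\ell$ and then observes that the resulting weights $N-\ell$ and $\ell$ add to $N$; the outcome is identical.
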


\begin{proof}

\begin{align*}
&\Kf(\Cay(\ZZ_N,\{+1,+2\})) \\
&=\sum_{0\leq i<j\leq N-1}r(\Cay(\ZZ_N,\{+1,+2\});v_i,v_j) \\
&=\sum_{0\leq i<j\leq N-1}\frac{h(\Cay(\ZZ_N,\{+1,+2\});i,j)+h(\Cay(\ZZ_N,\{+1,+2\});j,i)}{\sum_{0\leq k,\ell\leq N-1}w((v_k,v_\ell))} \\
&=\sum_{0\leq i<j\leq N-1}\frac{h(\Cay(\ZZ_N,\{+1,+2\});i,j)+h(\Cay(\ZZ_N,\{+1,+2\});j,i)}{N} \\
&=\frac{1}{N}( \sum_{\ell=1}^{N-1}(N-\ell)h(\Cay(\ZZ_N,\{+1,+2\});0,\ell) \\
&~+\sum_{\ell=1}^{N-1}\ell h(\Cay(\ZZ_N,\{+1,+2\});0,\ell) ) \\
&=\sum_{\ell=1}^{N-1}h(\Cay(\ZZ_N,\{+1,+2\});0,\ell) \\
&=\sum_{\ell=1}^{N-1}\frac{N(p-1)((p-1)^\ell-1)-\ell((p-1)^N-1)}{(p-2)((p-1)^N-1)} \\
&=\frac{1}{(p-2)((p-1)^N-1)} \times \\
&~\left( N(p-1)(\sum_{\ell=1}^{N-1}(p-1)^\ell-\sum_{\ell=1}^{N-1}1)-((p-1)^N-1)\sum_{\ell=1}^{N-1}\ell \right) \\
&=\frac{1}{(p-2)((p-1)^N-1)} \times \\
&~\left( N(p-1)(\frac{(p-1)(1-(p-1)^{N-1})}{1-(p-1)}-N+1)-((p-1)^N-1)\frac{N(N-1)}{2} \right) \\
&=\frac{N}{(p-2)((p-1)^N-1)} \times \\
&~\frac{2(p-1)^2-2(N-1)(2-p)(p-1)+(N-1)(2-p)+(p-1)^N(N(p-2)-3p+4)}{2(2-p)} \\
&=\frac{N\left[ (p-1)^N(3p-4-N(p-2))-(N-1)(p-2)(2p-3)-3p+4 \right]}{2(p-2)^2((p-1)^N-1)}.
\end{align*}

\end{proof}

\begin{rem}
For an unweighted graph, the transition probabilities to the adjacent vertices are equal, which corresponds to setting $p=\frac{1}{2}$ in our weighted model. 
We must account for the difference in the scaling of the graph weights.
In an unweighted graph, the total sum of the weights is $2N$, whereas our weighted model is normalized such that the total sum of the transition probabilities (weights) is $N$.
By adjusting our formula evaluated at $p=\frac{1}{2}$ with a factor of $\frac{1}{2}$ to correct this scaling difference, we find that it coincides exactly with the unweighted Kirchhoff index.
\end{rem}

Using the same approach, we can also obtain the weighted Kirchhoff index~\cite{MMS} and the degree Kirchhoff index~\cite{CZ, ZW}.
The resulting formulas coincide exactly with those presented above.

\section*{Acknowledgements}

The authors would like to thank Professor Tsuyoshi Miezaki for his helpful discussions and comments regarding this research. 

This work was supported by JSPS KAKENHI Grant Numbers JP23KJ2020 and JP25K23339, and a Waseda Research Institute for Science and Engineering Grant-in-Aid for Young Scientists (Early Bird). 


\section*{Declarations}

\subsection*{Conflict of interest}

The authors have no relevant financial or non-financial interests to disclose.


\end{document}